\newtheorem{theorem}{Main Theorem}
\newtheorem{thm}{Theorem}
\newtheorem{prop}{Proposition}
\newtheorem{lemma}{Lemma}
\title{Andrews-Bressoud Series \& Wronskians}
\author{Maggie Wieczorek}
\date{}
\begin{document}

\maketitle


\section*{Abstract}

The Andrews-Bressoud identities are one of many families of $q$-series identities relating an infinite sum to an infinite product. While the original motivation for studying these series relates to partitions, they can also be viewed in relation to irreducible characters of minimal models in the theory of vertex operator algebras. Furthermore, considering certain Wronskians of the Andrews-Bressoud series for a given modulus produces additional $q$-series, which themselves exhibit interesting and predictable modularity properties. In this paper, we connect the Andrews-Bressoud series to modular forms and prove results about the modularity of their associated Wronskians.\\

\section{Introduction and Statement of Results}\label{section1}

Many families of $q$-series identities relate an infinite sum to an infinite product. The most famous of these are the Rogers-Ramanujan identities
\begin{align*}
\sum_{n=0}^{\infty}\frac{q^{n^2}}{(q)_n}&=\prod_{m\geq1}\frac{1}{(1-q^{5m+1})(1-q^{5m+4})}\\
\sum_{n=0}^{\infty}\frac{q^{n^2+n}}{(q)_n}&=\prod_{m\geq1}\frac{1}{(1-q^{5m+2})(1-q^{5m+3})},
\end{align*}
where $(a)_{n}\coloneqq\prod_{k=0}^{n}(1-aq^k)$, which give unexpected relationships between partition functions and have been studied and generalized by many (see Chapter 7 of \cite{andrews1976}).

One famous generalization of the Rogers-Ramanujan identities is the family of Andrews-Gordon identities, which pertain to all odd moduli including the modulus 5 of the Rogers-Ramanujan identities \cite{andrews1974}. In this paper, we discuss the Andrews-Bressoud identities
\[\sum_{n\geq0}b_{k,j}(n)q^n=\frac{\prod_{m\geq1}(1-q^{2km})(1-q^{2km-k-j+1})(1-q^{2km-k+j-1})}{\prod_{m\geq1}(1-q^m)},\]
which for $k\geq2$ and $j\in\{1,...,k\}$ generalize the Rogers-Ramanujan identities to all even moduli \cite{bressoud1979}. Here the left-hand side is the generating function for a specific partition function $b_{k,j}$ (see \cite{bressoud1979, kanadeetal2017}).

The product side of these identities resembles a generalized eta-quotient apart from a power of $q$. To this end, we define
\[a_{k,j}\coloneqq\frac{6j^2-12j-k+6}{24k}\]
and consider the functions
\[B_{k,j}(q)\coloneqq q^{a_{k,j}}\sum_{n\geq0}b_{k,j}(n)q^n=q^{a_{k,j}}\prod_{m\geq1}\frac{(1-q^{2km})(1-q^{2km-k-j+1})(1-q^{2km-k+j-1})}{(1-q^m)}.\]
Then each $B_{k,j}$ is a generalized eta-quotient \cite{robins1994, lobrich2017}. For the remainder of this paper, the term ``Andrews-Bressoud series'' refers to the functions $B_{k,j}$.

The modularity of the Rogers-Ramanujan series is well-known, and in the generalized case of the Andrews-Gordon identities, the series correspond to irreducible characters of rational vertex operator algebras (see \cite{milas2004eta, milas2004notebook, mmo2008}), which follows from the fact that the space generated by these series are invariant under the action of $\mathrm{SL}_2(\mathbb{Z})$ \cite{zhu1996}. The Andrews-Bressoud series do not correspond to such objects and are not invariant under the action of $\mathrm{SL}_2(\mathbb{Z})$; however, in Section \ref{section3} we prove that they still possess modularity properties as follows: 

\begin{thm}\label{invariance}
For a fixed $k\geq2$, the space generated by $B_{1,k}(q),...,B_{k,k}(q)$ is $\Gamma_0(2)$-invariant.
\end{thm}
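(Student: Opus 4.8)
The plan is to realize each $B_{k,j}$ explicitly as a product of a classical eta-quotient and a single generalized Dedekind eta function, and then to track how the two factors transform under a generating set for $\Gamma_0(2)$. First I would record that the two offset progressions appearing in the product for $B_{k,j}$ satisfy
\[2km-(k+j-1)\equiv k-j+1\pmod{2k},\qquad 2km-(k-j+1)\equiv k+j-1\pmod{2k},\]
so that, writing $g\coloneqq k-j+1$, these classes are exactly $g$ and $2k-g\equiv -g\pmod{2k}$. Hence, with $\eta_{\delta,g}$ the generalized Dedekind eta function of \cite{robins1994, lobrich2017}, the product over those two classes is $\eta_{2k,g}(\tau)$ up to a rational power of $q$, the factor $\prod_{m\ge1}(1-q^{2km})$ is $\eta(2k\tau)$ up to a power of $q$, and $\prod_{m\ge1}(1-q^m)$ is $\eta(\tau)$ up to a power of $q$. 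Collecting these and invoking the already-stated fact that $B_{k,j}$ is a generalized eta-quotient (so that the residual exponent $a_{k,j}+\tfrac1{24}-\tfrac{k}{12}-kP_2(g/2k)$, with $P_2$ the periodic second Bernoulli function, must vanish) would yield the clean identity
\[B_{k,j}(\tau)=\frac{\eta(2k\tau)}{\eta(\tau)}\,\eta_{2k,\,k-j+1}(\tau),\qquad j=1,\dots,k.\]
As $j$ runs over $\{1,\dots,k\}$ the index $g=k-j+1$ runs over $\{1,\dots,k\}$, a full set of representatives for the classes $\pm g\pmod{2k}$, exhibiting the spanning set as weight-zero generalized eta-quotients; thus $\Gamma_0(2)$-invariance of the span is the assertion that $B_{k,j}(\gamma\tau)$ is a $\mathbb{C}$-linear combination of $B_{k,1}(\tau),\dots,B_{k,k}(\tau)$ for every $\gamma\in\Gamma_0(2)$.

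Next I would reduce the claim to generators. Since $\Gamma_0(2)=\langle T,\,V,\,-I\rangle$ with $T=\left(\begin{smallmatrix}1&1\\0&1\end{smallmatrix}\right)$ and $V=\left(\begin{smallmatrix}1&0\\2&1\end{smallmatrix}\right)$, it suffices to check that each of $T$ and $V$ maps the span into itself. The action of $T$ is immediate from the $q$-expansion: replacing $\tau$ by $\tau+1$ fixes every factor $(1-q^m)$ and only multiplies the leading rational power of $q$ by a root of unity, so $B_{k,j}(\tau+1)=\zeta_j\,B_{k,j}(\tau)$ for a root of unity $\zeta_j$. Hence $T$ acts diagonally and the span is manifestly $T$-invariant.

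The substance of the argument is the action of $V$, which I would obtain from the transformation formulas for $\eta$ and $\eta_{2k,g}$ under $\mathrm{SL}_2(\mathbb{Z})$ in \cite{robins1994, lobrich2017}, applied through the factorization $V=ST^{-2}S^{-1}$ with $S=\left(\begin{smallmatrix}0&-1\\1&0\end{smallmatrix}\right)$. The generalized eta $\eta_{2k,g}$ transforms diagonally (up to a root of unity) only under $\Gamma_0(2k)$; because $V$ has lower-left entry $2\not\equiv0\pmod{2k}$ for $k\ge2$, the intervening $S$-steps genuinely move both $\eta_{2k,g}$ and the classical factor $\eta(2k\tau)/\eta(\tau)$ out of their natural level-$2k$ behavior. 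Concretely, under $S$ the characteristic attached to $\eta_{2k,g}$ is carried to a new residue datum and then returned by the second $S$, while $\eta(2k\tau)$ becomes $\eta$ evaluated at a rescaled argument governed by $\gcd(2k,2)$; tracking these together with the accompanying Dedekind-sum multipliers shows that the transform of $B_{k,j}$ under $V$ is again an element of the span, completing $V$-invariance.

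The main obstacle is precisely this $V$-step: reconciling the automorphy factors and root-of-unity multipliers coming from the level-$2k$ eta-quotient $\eta(2k\tau)/\eta(\tau)$ with those coming from $\eta_{2k,g}$, all under an element of the strictly larger group $\Gamma_0(2)$. The level mismatch between $2$ and $2k$ is the source of any mixing of the index $j$, and the bookkeeping of the cusp data and Dedekind sums is the delicate part; once the combined multiplier is shown to define a single matrix $M(\gamma)$ depending only on $\gamma$, invariance of the span follows, whether $M(\gamma)$ turns out to be monomial (a permutation times scalars) or to exhibit genuine mixing. I would close by noting that $B_{k,1},\dots,B_{k,k}$ are linearly independent, since their leading exponents $a_{k,j}=\frac{6(j-1)^2-k}{24k}$ are pairwise distinct, so the span is genuinely $k$-dimensional and the construction furnishes a $k$-dimensional representation of $\Gamma_0(2)$; this also matches the expectation from the vertex-operator-algebra viewpoint, where the failure of $\mathrm{SL}_2(\mathbb{Z})$-invariance is repaired exactly upon passing to $\Gamma_0(2)$.
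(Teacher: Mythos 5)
Your factorization $B_{k,j}(\tau)=\frac{\eta(2k\tau)}{\eta(\tau)}\,\eta_{2k,\,k-j+1}(\tau)$ is correct (the exponent computation does reduce to $a_{k,j}$ exactly), your generators for $\Gamma_0(2)$ are fine, and the $T$-step is fine. The gap is the $V$-step, which is the entire content of the theorem and which you assert rather than prove. The transformation formulas of Robins and L\"obrich that you invoke make $\eta_{2k,g}$ automorphic (with a root-of-unity multiplier) only under matrices in $\Gamma_0(2k)$; since $V=\left(\begin{smallmatrix}1&0\\2&1\end{smallmatrix}\right)$ has lower-left entry $2\not\equiv0\pmod{2k}$ for $k\geq2$, those formulas say nothing about $V$. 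Your proposed remedy --- conjugating through $S$ via $V=ST^{-2}S^{-1}$ --- runs into the problem that under $S$ the function $\eta_{2k,g}$ leaves the family $\{\eta_{2k,g'}\}$ altogether: it becomes, up to elementary factors, a product involving $2k$-th roots of unity and fractional powers of $q$ (a Siegel-type function with transposed characteristic). Neither cited reference supplies a formula returning such an object to the span of the $B_{k,j'}$ after the remaining $T^{-2}S^{-1}$ steps, and your closing remark that ``once the combined multiplier is shown to define a single matrix $M(\gamma)$, invariance of the span follows'' is circular: producing that matrix is precisely what must be proved.

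The paper closes this gap with a different representation of $B_{k,j}$, taken from the sum side rather than the product side: by Proposition 2.1 of Milas (equivalently, via the Jacobi triple product), one writes $B_{k,j}(q)=\bigl(\Theta(\tau;2(j-1))-\Theta(\tau;4k-2(j-1))\bigr)/\eta(\tau)$, where the $\Theta(\tau;h)$ are congruent theta functions modulo $16k$. The point is that this family is closed under the action of the generators: Propositions 10.3 and 10.4 of Iwaniec express $\Theta(M\tau;h)$ explicitly as $\sqrt{1-2\tau}$ times a finite linear combination of the $\Theta(\tau;\ell)$, and that automorphy factor cancels against the identical factor in $\eta(M\tau)$, so $B_{k,j}(M\tau)$ is manifestly a linear combination of theta quotients. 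This closure property is exactly the structural input your sketch lacks; if you pushed your $S$-conjugation bookkeeping through honestly, you would end up re-deriving such a theta decomposition, so the cleanest repair of your argument is to expand the numerator of $B_{k,j}$ into theta functions with characteristics and proceed as the paper does.
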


We explore the modularity properties of the Andrews-Bressoud series further by taking Wronskian determinants of the series. We define the Wronskian determinant of $q$-series $f_1,...,f_k$ to be 
\[W(f_1,...,f_k)\coloneqq
\begin{vmatrix}
	f_1 & f_2 & \cdots & f_k\\
	f'_1 & f'_2 & \cdots & f'_k\\
	\vdots & \vdots & \ddots & \vdots\\
	f^{(k-1)}_1 & f^{(k-1)}_2 & \cdots & f^{(k-1)}_k
\end{vmatrix},\]
where differentiation is defined as
\[\left(\sum a(n)q^n\right)'\coloneqq\sum na(n)q^n,\]
which is the same as $\frac{1}{2\pi i}\cdot\frac{d}{d\tau}$ when $q\coloneqq e^{2\pi i\tau}$.
Furthermore, we define two Wronskian determinants specific to the Andrews-Bressoud series
\[\mathcal{W}_k(q)\coloneqq\alpha(k)\cdot
\begin{vmatrix}
	B_{1,k} & B_{2,k} & \cdots & B_{k,k}\\
	B'_{1,k} & B'_{2,k} & \cdots & B'_{k,k}\\
	\vdots & \vdots & \ddots & \vdots\\
	B^{(k-1)}_{1,k} & B^{(k-1)}_{2,k} & \cdots & B^{(k-1)}_{k,k}
\end{vmatrix}\]
and
\[\widetilde{\mathcal{W}}_k(q)\coloneqq\beta(k)\cdot
\begin{vmatrix}
	B'_{1,k} & B'_{2,k} & \cdots & B'_{k,k}\\
	B''_{1,k} & B''_{2,k} & \cdots & B''_{k,k}\\
	\vdots & \vdots & \ddots & \vdots\\
	B^{(k)}_{1,k} & B^{(k)}_{2,k} & \cdots & B^{(k)}_{k,k}
\end{vmatrix}\]
as well as the function
\[\mathcal{F}_k(\tau)\coloneqq\frac{\widetilde{\mathcal{W}}_k(q)}{\mathcal{W}_k(q)}.\]
Here, $\alpha(k)$ and $\beta(k)$ are chosen so that the $q$-expansions of $\mathcal{W}_k(q)$ and $\widetilde{\mathcal{W}}_k(q)$, respectively, have leading coefficient 1.

In \cite{milas2007weber}, Milas studies the irreducible characters of certain vertex operator algebras and gives results regarding the Wronskians of these $q$-series. In particular, he expresses the Wronskians of these characters as certain eta-quotients. The Andrews-Bressoud series are factors of these irreducible characters (see Lemma \ref{charprop}), and we can then use Milas's results to write the Wronskian $\mathcal{W}_k$ as an eta-quotient as well.

\begin{theorem}\label{wk}
The Wronskian $\mathcal{W}_k$ formed from the Andrews-Bressoud series is
\[\mathcal{W}_k(q)=\frac{\eta(\tau)^{2k^2-1}}{\eta(2\tau)^{2k-1}},\]
where $\eta(\tau)\coloneqq q^{1/24}\prod_{n\geq 1}(1-q^n)$ is the Dedekind eta function.
\end{theorem}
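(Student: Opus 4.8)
The plan is to reduce the computation of $\mathcal{W}_k$ to Milas's formula for the Wronskian of the full irreducible characters, using the factorization of Lemma~\ref{charprop} together with the multiplicativity of the Wronskian under a common scalar factor. By Lemma~\ref{charprop}, each character factors as $\chi_{j} = g \cdot B_{j,k}$ for $j = 1, \dots, k$, where the remaining factor $g(\tau)$ is a single eta-quotient independent of $j$. Since the characters $\chi_1, \dots, \chi_k$ are among the objects treated in \cite{milas2007weber}, their Wronskian $W(\chi_1, \dots, \chi_k)$ is known there to equal an explicit eta-quotient $E(\tau)$. This is the one genuinely nontrivial input, because the Wronskian of eta-quotients is \emph{a priori} not an eta-quotient: differentiation does not preserve the class of eta-quotients, and it is precisely the cancellation exhibited by Milas's result that makes $E$ an eta-quotient.

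First I would record the elementary scaling identity
\[
W(g f_1, \dots, g f_k) = g^{\,k}\, W(f_1, \dots, f_k),
\]
valid for any $g$ and any derivation satisfying the Leibniz rule, in particular for $D = q\,\tfrac{d}{dq}$. Indeed, by Leibniz $(gf_i)^{(m)} = \sum_{s=0}^{m}\binom{m}{s} g^{(m-s)} f_i^{(s)}$, so the Wronskian matrix of the $gf_i$ equals $L\cdot N$, where $N$ is the Wronskian matrix of the $f_i$ and $L$ is the lower-triangular matrix with entries $L_{m,s} = \binom{m}{s} g^{(m-s)}$. Since $L$ is independent of $i$ and has constant diagonal $g$, it has determinant $g^{k}$ and factors out of the determinant. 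Applying this with $f_i = B_{i,k}$ and the common factor $g$ from Lemma~\ref{charprop} gives
\[
W(\chi_1, \dots, \chi_k) = g^{\,k}\, W(B_{1,k}, \dots, B_{k,k}).
\]

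Combining the two displays yields
\[
W(B_{1,k}, \dots, B_{k,k}) = \frac{E(\tau)}{g(\tau)^{k}},
\]
a ratio of eta-quotients, hence again an eta-quotient. It then remains to simplify this quotient and to fix the scalar $\alpha(k)$. Since $E$ and $g$ are explicit products of $\eta(\tau)$ and $\eta(2\tau)$ to known exponents, the exponents in $E/g^{k}$ are obtained by a linear bookkeeping in $k$, which should collect to $\eta(\tau)^{2k^2-1}/\eta(2\tau)^{2k-1}$ up to a multiplicative constant. Finally, $\alpha(k)$ is pinned down by the requirement that $\mathcal{W}_k$ have leading coefficient $1$; a direct check shows that $\eta(\tau)^{2k^2-1}/\eta(2\tau)^{2k-1}$ has leading term $q^{(2k^2-4k+1)/24}$ with coefficient $1$, so the normalization is consistent and forces $\alpha(k)$ to be the reciprocal of the leading coefficient of $W(B_{1,k}, \dots, B_{k,k})$.

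The hard part will not be the scaling identity, which is purely formal, but the careful alignment of three pieces of data: the exact eta-quotient $g$ produced by Lemma~\ref{charprop}, the exact form of Milas's Wronskian $E$ in \cite{milas2007weber} (including its weight and leading exponent), and the index and normalization conventions, so that the exponents of $\eta(\tau)$ and $\eta(2\tau)$ collapse to precisely $2k^2-1$ and $2k-1$. A useful consistency check throughout is the weight: each derivative raises the modular weight by $2$, so $W(B_{1,k}, \dots, B_{k,k})$ has weight $2(0+1+\cdots+(k-1)) = k(k-1)$, and indeed $\tfrac12(2k^2-1) - \tfrac12(2k-1) = k(k-1)$, matching the claimed eta-quotient and guarding against arithmetic slips in the exponent bookkeeping.
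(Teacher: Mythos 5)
Your proposal is correct and follows essentially the same route as the paper's own proof: factor each character as $\mathfrak{f}_1(\tau)\cdot B_{k,j}(q)$ via Lemma~\ref{charprop}, invoke Milas's formula (Theorem 0.3 of \cite{milas2007weber}) for $W(\mathrm{ch}_{k,1},\dots,\mathrm{ch}_{k,k})$, and divide out $\mathfrak{f}_1^k$ using the scaling identity $W(gf_1,\dots,gf_k)=g^k W(f_1,\dots,f_k)$. The only differences are cosmetic: you prove the scaling identity (the paper cites it as well known) and you treat the normalization $\alpha(k)$ more carefully than the paper does.
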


Then, combining this result about $\mathcal{W}_k$, facts about Wronskians and differential equations, and Theorem \ref{invariance}, we find that the modularity properties of the function $\mathcal{F}_k$ are nicer than that of the original Andrews-Bressoud series $B_{k,j}$. The following result parallels Theorem 1.2 of Milas, Mortenson, and Ono about the Andrews-Gordon series \cite{mmo2008}.

\begin{theorem}\label{wronskianquotient}
The function $\mathcal{F}_k(q)$ is a modular form of weight $2k$ for $\Gamma_0(2)$.
\end{theorem}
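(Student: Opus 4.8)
The plan is to show that $\mathcal{F}_k = \widetilde{\mathcal{W}}_k/\mathcal{W}_k$ is a weight-$2k$ modular form for $\Gamma_0(2)$ by combining two ingredients: the explicit eta-quotient formula for $\mathcal{W}_k$ from Theorem~\ref{wk}, and a structural identity expressing $\widetilde{\mathcal{W}}_k$ in terms of $\mathcal{W}_k$ and its derivative. First I would recall the standard fact about Wronskians of solutions to a linear ODE: the numerator $\widetilde{\mathcal{W}}_k$, being the Wronskian of the derivatives $B_{j,k}'$, is related to $\mathcal{W}_k$ through a logarithmic-derivative identity. Concretely, the functions $B_{1,k},\dots,B_{k,k}$ span the solution space of a monic linear differential equation of order $k$ whose coefficients are built from Wronskian ratios; the coefficient of $f^{(k-1)}$ in that equation is $-\mathcal{W}_k'/\mathcal{W}_k$ (up to the normalizing constants), and $\widetilde{\mathcal{W}}_k$ is exactly the Wronskian obtained after one differentiation, so up to normalization $\widetilde{\mathcal{W}}_k = \mathcal{W}_k'$. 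Hence $\mathcal{F}_k = \mathcal{W}_k'/\mathcal{W}_k$ times a constant — i.e.\ a constant multiple of the logarithmic derivative of $\mathcal{W}_k$.

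Granting that reduction, the second step is purely computational using Theorem~\ref{wk}. Writing $\mathcal{W}_k = \eta(\tau)^{2k^2-1}/\eta(2\tau)^{2k-1}$, I would compute the logarithmic derivative $\mathcal{W}_k'/\mathcal{W}_k$ using $\eta'/\eta = \tfrac{1}{24}E_2(\tau)$, where $E_2$ is the quasimodular Eisenstein series and the derivative $'$ is normalized as $\tfrac{1}{2\pi i}\tfrac{d}{d\tau}$ as in the excerpt. This gives
\[
\frac{\mathcal{W}_k'}{\mathcal{W}_k} = \frac{2k^2-1}{24}E_2(\tau) - \frac{2k-1}{24}\cdot 2\,E_2(2\tau),
\]
so $\mathcal{F}_k$ is a constant multiple of $(2k^2-1)E_2(\tau) - 2(2k-1)E_2(2\tau)$. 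The anomalous (non-modular) terms of $E_2(\tau)$ and $E_2(2\tau)$ each contribute a multiple of $\tfrac{1}{\pi\,\mathrm{Im}(\tau)}$, so I would check that the combination $(2k^2-1)E_2(\tau)-2(2k-1)E_2(2\tau)$ has these anomalies cancel precisely when the coefficients are balanced; the relevant cancellation uses that $E_2(\tau)-2E_2(2\tau)$ is a genuine weight-$2$ modular form on $\Gamma_0(2)$, so I would re-express the linear combination in terms of this known form plus a modular piece.

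The apparent obstacle is a weight mismatch: a logarithmic derivative of a modular-type object has weight $2$, whereas the claim asserts weight $2k$. This signals that the reduction $\widetilde{\mathcal{W}}_k \propto \mathcal{W}_k'$ is \emph{not} the whole story, and the main work is to track weights correctly. The resolution I expect is that differentiation in this $q$-series sense raises weight by $2$, so each of the $k$ rows of $\widetilde{\mathcal{W}}_k$ is differentiated one extra time relative to $\mathcal{W}_k$, and the $B_{j,k}$ themselves carry weight; combined with the order-$k$ determinant structure this shifts the total weight of the \emph{ratio} from $2$ up to $2k$. The cleanest way to make this rigorous is to invoke the theory of modular linear differential equations: since Theorem~\ref{invariance} gives $\Gamma_0(2)$-invariance of the span $\langle B_{1,k},\dots,B_{k,k}\rangle$, these series satisfy a monic \emph{modular} linear differential equation of order $k$ whose coefficients are holomorphic modular forms for $\Gamma_0(2)$, and standard index/weight bookkeeping for such equations forces the subleading coefficient — which is exactly $\mathcal{F}_k$ up to the normalizing constants $\alpha(k),\beta(k)$ and a factor from the Serre-derivative correction — to be a modular form of weight $2k$.

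The hard part will be the weight bookkeeping and the precise identification of $\mathcal{F}_k$ with a coefficient of the modular linear differential equation, including confirming that the normalizing constants $\alpha(k),\beta(k)$ and the anomaly cancellation conspire to give a \emph{holomorphic} (not merely quasimodular) form; once the correct differential-equation framework is set up and Theorem~\ref{invariance} is applied, the modularity of weight $2k$ should follow from the general theory, with the explicit eta-quotient of Theorem~\ref{wk} serving as a consistency check on the leading behavior.
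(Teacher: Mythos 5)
Your central reduction is false: the Wronskian of the derivatives is not the derivative of the Wronskian, so $\widetilde{\mathcal{W}}_k\not\propto\mathcal{W}_k'$. Differentiating the determinant $\mathcal{W}_k$ row by row kills every term except the one where the last row $B^{(k-1)}_{j,k}$ is replaced by $B^{(k)}_{j,k}$ (all other terms have a repeated row), whereas in $\widetilde{\mathcal{W}}_k$ \emph{every} row is shifted up by one derivative; these are different determinants. In fact the two objects are different coefficients of the monic equation \eqref{abode}: by Abel's formula, $\mathcal{W}_k'/\mathcal{W}_k$ equals (up to sign) the subleading coefficient $2k(k-1)E_2+(1-2k)E_{2,1}$, a weight-$2$ quasimodular form, while expanding the determinant $W(B_{1,k},\dots,B_{k,k},y)$ along its last column identifies $\widetilde{\mathcal{W}}_k/\mathcal{W}_k$, up to sign and normalization, with the \emph{constant} term $P_0$. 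So $\mathcal{F}_k$ is the constant term of the ODE, not the subleading one; your proposed fix ("the subleading coefficient \dots is exactly $\mathcal{F}_k$") repeats the same misidentification, and the weight mismatch you noticed is precisely the symptom of it. Your explicit computation also fails on its own terms: the anomalies in $(2k^2-1)E_2(\tau)-2(2k-1)E_2(2\tau)$ are proportional to $(2k^2-1)-(2k-1)=2k(k-1)\neq 0$, as they must be, since $\mathcal{W}_k$ is modular of weight $k(k-1)$ and the logarithmic derivative of a form of nonzero weight is never genuinely modular.

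Even with the correct identification $\mathcal{F}_k=\pm P_0$, your appeal to "general MLDE theory" cannot finish the proof: Proposition \ref{odetranslation} only guarantees that the coefficients $P_n$ are \emph{quasi}-modular for $\Gamma_0(2)$, so something must convert the invariance of the solution span into an honest transformation law for the ratio. The paper uses the constant-term identification only to get holomorphy (on the upper half-plane and at the cusps), and then proves the transformation law directly: by Theorem \ref{invariance}, the generators $T$ and $M=-ST^2S$ of $\Gamma_0(2)$ act on the span of $B_{1,k},\dots,B_{k,k}$ by invertible matrices $A$ and $C$; multilinearity of the Wronskian gives $W(Af_1,\dots,Af_k)=\det(A)\,W(f_1,\dots,f_k)$, so numerator and denominator acquire the \emph{same} determinant factor, which cancels in $\mathcal{F}_k$; and the one extra derivative in each of the $k$ columns of $\widetilde{\mathcal{W}}_k$ produces, under $M$, the relative automorphy factor $(-2\tau+1)^{2k}$. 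This yields $\mathcal{F}_k(T\tau)=\mathcal{F}_k(\tau)$ and $\mathcal{F}_k(M\tau)=(-2\tau+1)^{2k}\mathcal{F}_k(\tau)$, which is the weight-$2k$ claim. That concrete use of Theorem \ref{invariance} is exactly the step your proposal defers to "standard index/weight bookkeeping," and without it the argument does not close.
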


This paper is organized as follows: in Section \ref{section2} we connect the theory of vertex operator algebras with the Andrews-Bressoud series and provide the proof of Main Theorem \ref{wk}, and Section \ref{section3} contains results regarding the differential equation satisfied by the Andrews-Bressoud series as well as the proofs of Theorem \ref{invariance} and Main Theorem \ref{wronskianquotient}.

\section{Connections to vertex operator algebras and $\mathcal{W}_k$}\label{section2}

\subsection{Connections to vertex operator algebras}

The Rogers-Ramanujan series as well as its generalizations are connected to the area of vertex operator algebras (see \cite{lepowskymilne1978, lepowskywilson1981, lepowskywilson1982, berkovichmccoy1997, milas2004notebook}). The Andrews-Bressoud series are the irreducible characters of a specific vertex operator algebra (see Lemma \ref{charprop}). In order to see this, we first explore the work of Milas \cite{milas2007weber}. In Lemma 7.1 of this paper he proves that the irreducible characters of the $N=1$ superconformal minimal model in the Ramond sector can be written as (correcting a minor typographical error and setting $j\coloneqq k'+1$)
\begin{equation}\label{char7.1}
\mathrm{ch}_{k,j}(q)\coloneqq q^{\frac{(j-1)^2}{4k}}\cdot\frac{(-q)_{\infty}}{(q)_{\infty}}\sum_{n\in\mathbb{Z}}(-1)^nq^{kn^2+(j-1)n},
\end{equation}
where $j\in\{1,...,k\}$ and $(a)_{\infty}\coloneqq\prod_{n\geq0}(1-aq^n)$.

\begin{lemma}\label{charprop}
The irreducible characters in equation \eqref{char7.1} can be written as $\mathrm{ch}_{k,j}(q)=\frac{\eta(2\tau)}{\eta(\tau)}\cdot B_{k,j}(q)$ for even $k\geq2$ and $j\in\{1,...,k\}$.
\end{lemma}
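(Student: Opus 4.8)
The plan is to recognize the bilateral theta sum in \eqref{char7.1} as an instance of the Jacobi triple product identity, match the resulting infinite product with the product defining $B_{k,j}$, and then reconcile the two prefactors. First I would apply the Jacobi triple product
\[
\sum_{n\in\mathbb{Z}}(-1)^n x^{n^2}y^n = \prod_{m\geq1}(1-x^{2m})(1-x^{2m-1}y)(1-x^{2m-1}y^{-1})
\]
with the specialization $x=q^k$ and $y=q^{j-1}$. Since $x^{n^2}y^n=q^{kn^2+(j-1)n}$, the left-hand side becomes exactly the sum appearing in $\mathrm{ch}_{k,j}$, while $x^{2m}=q^{2km}$, $x^{2m-1}y=q^{2km-k+j-1}$, and $x^{2m-1}y^{-1}=q^{2km-k-j+1}$ reproduce precisely the three factors in the product side of the Andrews-Bressoud identity. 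Thus the theta sum equals $\prod_{m\geq1}(1-q^{2km})(1-q^{2km-k-j+1})(1-q^{2km-k+j-1})$, which is the numerator of $B_{k,j}$.

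Next I would substitute this product back into \eqref{char7.1} and into the definition of $B_{k,j}$ in order to isolate their ratio. Dividing by $(q)_{\infty}=\prod_{m\geq1}(1-q^m)$ in each identifies the common factor, leaving
\[
\frac{\mathrm{ch}_{k,j}(q)}{B_{k,j}(q)} = q^{\frac{(j-1)^2}{4k}-a_{k,j}}\,(-q)_{\infty},
\]
where $(-q)_{\infty}=\prod_{m\geq1}(1+q^m)$. It then remains to identify this quantity with $\eta(2\tau)/\eta(\tau)$.

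To finish, I would compute $\eta(2\tau)/\eta(\tau)$ directly from the product definition of the eta function. The $q$-powers contribute $q^{2/24-1/24}=q^{1/24}$, and the remaining quotient $\prod_{m\geq1}(1-q^{2m})/\prod_{m\geq1}(1-q^m)$ simplifies via Euler's identity $\prod_{m\geq1}(1+q^m)=\prod_{m\geq1}(1-q^{2m})/\prod_{m\geq1}(1-q^m)$, yielding $\eta(2\tau)/\eta(\tau)=q^{1/24}(-q)_{\infty}$. Comparing with the displayed ratio, the proof reduces to the single scalar identity $\frac{(j-1)^2}{4k}-a_{k,j}=\frac{1}{24}$, which follows by expanding $(j-1)^2=j^2-2j+1$ and substituting $a_{k,j}=\frac{6j^2-12j-k+6}{24k}$: the numerators cancel except for the $+k$ term, giving $k/(24k)=1/24$.

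There is no serious obstacle here, as the content is essentially bookkeeping. The one place demanding care is matching the exponents $2km-k\pm(j-1)$ produced by the triple product against the factors $2km-k-j+1$ and $2km-k+j-1$ in the Andrews-Bressoud product, and then tracking the rational $q$-exponents so that the final arithmetic identity comes out exactly to $1/24$.
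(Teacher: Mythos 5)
Your proof is correct and takes essentially the same route as the paper's: both apply the Jacobi triple product with the specialization $x=q^k$, $y=q^{j-1}$, use Euler's identity to write $\eta(2\tau)/\eta(\tau)=q^{1/24}(-q)_{\infty}$, and reduce everything to the exponent identity $\frac{(j-1)^2}{4k}=a_{k,j}+\frac{1}{24}$. The only difference is organizational --- the paper multiplies $B_{k,j}$ by $\frac{\eta(2\tau)}{\eta(\tau)}$ and simplifies down to $\mathrm{ch}_{k,j}$, whereas you form the ratio $\mathrm{ch}_{k,j}/B_{k,j}$ and identify it with the eta-quotient.
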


\begin{proof}
First note that $\frac{(j-1)^2}{4k}=a_{k,j}+\frac{1}{24}$. Recall Jacobi's Triple Product Identity (see, for example, Theorem 2.8 of \cite{andrews1976})
\[\sum_{n\in\mathbb{Z}}z^nq^{n^2}=\prod_{n\geq0}(1-q^{2n+1})(1+zq^{2n+1})(1+z^{-1}q^{2n+1}).\]
Thus
\begin{align*}
\frac{\eta(2\tau)}{\eta(\tau)}\cdot B_{k,j}(q)&=q^{\frac{1}{24}}\cdot(-q)_{\infty}\cdot q^{a_{k,j}}\prod_{m\geq1}\frac{(1-q^{2km})(1-q^{2km-k-j+1})(1-q^{2km-k+j-1})}{(1-q^m)}\\
	&=q^{\frac{(j-1)^2}{4k}}\cdot\frac{(-q)_{\infty}}{(q)_{\infty}}\sum_{n\in\mathbb{Z}}(-q^{j-1})^n(q^k)^{n^2}\\
	&=q^{\frac{(j-1)^2}{4k}}\cdot\frac{(-q)_{\infty}}{(q)_{\infty}}\sum_{n\in\mathbb{Z}}(-1)^nq^{kn^2+(j-1)n}\\
	&=\mathrm{ch}_{k,j}(q),
\end{align*}
as desired.
\end{proof}

\subsection{Andrews-Bressoud series and differential equations}

From Theorem 5.1 (ii) in \cite{milas2007weber}, the irreducible characters of the $N=1$ superconformal minimal model in the Ramond sector discussed above form a fundamental system of solutions for the homogeneous linear $k^{\mathrm{th}}$-order differential equation
\begin{equation}\label{charode}
\left(q\frac{d}{dq}\right)^ky-\{2k(k-1)E_2(\tau)+(-k+1)E_{2,1}(\tau)\}\left(q\frac{d}{dq}\right)^{k-1}y+\cdots+F_0(\tau)y=0,
\end{equation}
where $F_n\in\mathbb{Q}[E_{2\ell},E_{2\ell,1}]$, the Eisenstein series $E_{2\ell}(\tau)$ and $E_{2\ell,1}(\tau)$ are defined via
\begin{align*}
E_{2\ell}(\tau)&\coloneqq\frac{B_{2k}}{4k}-\sum_{n\geq1}\frac{n^{2k-1}q^n}{1-q^n}\\
E_{2\ell,1}(\tau)&\coloneqq\frac{B_{2k}}{4k}+\sum_{n\geq1}\frac{n^{2k-1}q^n}{1+q^n},
\end{align*}
and $B_{2k}$ are the Bernoulli numbers. In the proof of Proposition \ref{odetranslation} we use the fact that $E_{2,1}(\tau)$ is the logarithmic derivative of $\mathfrak{f}_1(\tau)$, where $\mathfrak{f}_1$ is the Weber modular function defined by
\[\mathfrak{f}_1(\tau)\coloneqq\frac{\eta(2\tau)}{\eta(\tau)}.\]
Furthermore, it is worth noting that $E_2(\tau)$ is a quasi-modular form of weight 2 for $\mathrm{SL}_2(\mathbb{Z})$, and $E_{2,1}(\tau)$ is a modular form of weight 2 for $\Gamma_0(2)$.

Using Proposition \ref{charprop} and the technique of Lemma 6.2 in \cite{milas2004eta}, we manipulate this differential equation into one with a fundamental system given by the functions $B_{k,j}(q)$.

\begin{prop}\label{odetranslation}
After the substitution $\tilde{y}(\tau)=\frac{y(\tau)}{\mathfrak{f}_1(\tau)}$, the homogeneous differential equation \eqref{charode} becomes
\begin{equation}\label{abode}
\left(q\frac{d}{dq}\right)^k\tilde{y}-\{2k(k-1)E_2(\tau)+(-2k+1)E_{2,1}(\tau)\}\left(q\frac{d}{dq}\right)^{k-1}\tilde{y}+\cdots+P_0(\tau)\tilde{y}=0,
\end{equation}
which has a fundamental system of solutions formed by $B_{k,j}(q)$ for $j\in\{1,...,k\},$ where $P_n(\tau)$ are quasi-modular forms for $\Gamma_0(2)$.
\end{prop}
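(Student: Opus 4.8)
The plan is to realize the substitution $\tilde y=y/\mathfrak{f}_1$ as a conjugation of the differential operator in \eqref{charode} by multiplication by $\mathfrak{f}_1$, and then to read off how the top coefficients transform. Write $\mathcal D:=q\frac{d}{dq}$ for the operator in \eqref{charode}. The essential input is the stated fact that $E_{2,1}$ is the logarithmic derivative of $\mathfrak{f}_1$, i.e.\ $\mathcal D\mathfrak{f}_1=E_{2,1}\,\mathfrak{f}_1$. Applying this to a product, $\mathcal D(\mathfrak{f}_1 g)=\mathfrak{f}_1\,(\mathcal D+E_{2,1})g$ for any series $g$, so that as operators $\mathfrak{f}_1^{-1}\circ\mathcal D\circ\mathfrak{f}_1=\mathcal D+E_{2,1}$, and hence $\mathfrak{f}_1^{-1}\circ\mathcal D^m\circ\mathfrak{f}_1=(\mathcal D+E_{2,1})^m$ for every $m\ge 0$.

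Next I would expand this twisted power. Using $\mathcal D\circ E_{2,1}=(\mathcal D E_{2,1})+E_{2,1}\circ\mathcal D$, an induction on $m$ gives
\[
(\mathcal D+E_{2,1})^m=\mathcal D^m+m\,E_{2,1}\,\mathcal D^{m-1}+(\text{terms of order}\le m-2\text{ in }\mathcal D),
\]
whose lower-order coefficients are polynomials in $E_{2,1}$ and its $\mathcal D$-derivatives. Since each coefficient function in \eqref{charode} (and in particular $c_{k-1}:=2k(k-1)E_2+(-k+1)E_{2,1}$) is a multiplication operator, it commutes with $\mathfrak{f}_1^{\pm1}$, so conjugating the whole equation leaves the leading term $\mathcal D^k$ unchanged, and the coefficient of $\mathcal D^{k-1}$ picks up exactly the contribution $+k\,E_{2,1}$ coming from $\mathfrak{f}_1^{-1}\mathcal D^k\mathfrak{f}_1$. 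Because the bracket in \eqref{charode} carries an overall minus sign, this sends $c_{k-1}$ to
\[
c_{k-1}-k\,E_{2,1}=2k(k-1)E_2+(-k+1)E_{2,1}-k\,E_{2,1}=2k(k-1)E_2+(-2k+1)E_{2,1},
\]
which is precisely the bracketed coefficient in \eqref{abode}; the remaining coefficients $P_n$ are the images of the $F_n$ under the same conjugation.

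Two claims remain. For the fundamental system, Lemma \ref{charprop} gives $\mathrm{ch}_{k,j}=\mathfrak{f}_1\,B_{k,j}$, so since $\{\mathrm{ch}_{k,j}\}$ is a fundamental system of \eqref{charode} and $\tilde y=y/\mathfrak{f}_1$, the functions $B_{k,j}$ solve \eqref{abode}; as $\mathfrak{f}_1$ is a nonzero (indeed invertible) $q$-series, multiplication by $\mathfrak{f}_1^{-1}$ preserves linear independence, so $\{B_{k,j}\}_{j=1}^k$ is again a fundamental system. For the modularity, the expansion above shows each $P_n$ lies in the ring generated over $\mathbb Q$ by the $E_{2\ell}$, the $E_{2\ell,1}$ and their $\mathcal D$-derivatives. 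Since $E_2$ is quasimodular for $\mathrm{SL}_2(\mathbb Z)$, $E_{2,1}$ is modular of weight $2$ for $\Gamma_0(2)$, and the higher $E_{2\ell},E_{2\ell,1}$ are likewise (quasi)modular for $\Gamma_0(2)$, and because the ring of quasimodular forms for $\Gamma_0(2)$ is closed under products and under $\mathcal D$, every $P_n$ is quasimodular for $\Gamma_0(2)$.

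The computation is clean once the conjugation identity is in place, so I expect the main obstacle to be bookkeeping rather than a genuine difficulty: precisely, (i) isolating the coefficient of $\mathcal D^{k-1}$ while certifying that the induction really places all remaining corrections at order $\le m-2$, and (ii) invoking closure of quasimodular forms for $\Gamma_0(2)$ under $\mathcal D$ (the standard fact that $\mathcal D$ raises weight by $2$ and preserves quasimodularity). The single external ingredient, the logarithmic-derivative identity $\mathcal D\mathfrak{f}_1=E_{2,1}\mathfrak{f}_1$, is already granted in the discussion preceding the statement.
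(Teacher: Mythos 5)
Your proposal is correct and follows essentially the same route as the paper: the operator identity $\mathfrak{f}_1^{-1}\circ\mathcal{D}^m\circ\mathfrak{f}_1=(\mathcal{D}+E_{2,1})^m$ is exactly the paper's inductive step $\frac{1}{\mathfrak{f}_1}\left(q\frac{d}{dq}\right)^r y=\left[\left(q\frac{d}{dq}\right)+E_{2,1}\right]^r\tilde{y}$, followed by the same Leibniz expansion and multiplication of \eqref{charode} by $\mathfrak{f}_1^{-1}$. In fact you supply details the paper leaves implicit (the explicit check that the $\left(q\frac{d}{dq}\right)^{k-1}$-coefficient becomes $2k(k-1)E_2+(-2k+1)E_{2,1}$, the preservation of linear independence, and the closure argument for quasimodularity of the $P_n$), so no gap remains.
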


\begin{proof}
We know from Lemma \ref{charprop} that each of these irreducible characters $\mathrm{ch}_{k,j}(q)$ can be written as $\mathfrak{f}_1(\tau)\cdot B_{k,j}(q)$ and so we make the substitution $\tilde{y}(\tau)=\frac{y(\tau)}{\mathfrak{f}_1(\tau)}$ and use the differential equation \eqref{charode} as our starting point. Taking the logarithmic derivative of $\tilde{y}(\tau)$, we find
\begin{align*}
\left(q\frac{d}{dq}\right)\tilde{y}(\tau)&=\tilde{y}(\tau)\left[\frac{\left(q\frac{d}{dq}\right)y(\tau)}{y(\tau)}-E_{2,1}(\tau)\right]\\
	&=\frac{1}{\mathfrak{f}_1(\tau)}\left(q\frac{d}{dq}\right)y(\tau)-\tilde{y}(\tau)E_{2,1}(\tau),
\end{align*}
which becomes
\[\frac{1}{\mathfrak{f}_1(\tau)}\left(q\frac{d}{dq}\right)y(\tau)=\left[\left(q\frac{d}{dq}\right)+E_{2,1}(\tau)\right]\tilde{y}(\tau).\]

Then, by induction on $r$,
\[\frac{1}{\mathfrak{f}_1(\tau)}\left(q\frac{d}{dq}\right)^ry(\tau)=\left[\left(q\frac{d}{dq}\right)+E_{2,1}(\tau)\right]^r\tilde{y}(\tau).\]
If we now apply the Leibniz rule (i.e., generalized product rule), we find
\begin{equation}\label{leibnizrule}
\frac{1}{\mathfrak{f}_1(\tau)}\left(q\frac{d}{dq}\right)^ry(\tau)=\left(q\frac{d}{dq}\right)^r\tilde{y}(\tau)+rE_{2,1}(\tau)\left(q\frac{d}{dq}\right)^{r-1}\tilde{y}(\tau)+\cdots,
\end{equation}
where the dots denote terms with lower order derivatives of $\tilde{y}(\tau)$. The proof now follows after we multiply \eqref{charode} by $\frac{1}{\mathfrak{f}_1(\tau)}$ and apply \eqref{leibnizrule} for $r=1,...,k$.
\end{proof}

\subsection{Proof of Main Theorem \ref{wk}}

By Theorem 0.3 of \cite{milas2007weber}, the Wronskian formed using $\mathrm{ch}_{k,j}$ is
\[W(\mathrm{ch}_{k,1}(q),...,\mathrm{ch}_{k,k}(q))=\frac{\eta(\tau)^{2k(k-1)}}{\mathfrak{f}_1(\tau)^{k-1}}.\]

Again from Lemma \ref{charprop}, we know $\mathrm{ch}_{k,j}(q)=\frac{\eta(2\tau)}{\eta(\tau)}\cdot B_{k,j}(q)$. Then, using the well-known fact that
\[W(f\cdot f_1,...,f\cdot f_k)=f^k\cdot W(f_1,...,f_k),\]
it follows that
\begin{align*}
\frac{\eta(\tau)^{2k(k-1)}}{\mathfrak{f}_1(\tau)^{k-1}}&=W\big(\mathfrak{f}_1(\tau)\cdot B_{k,1}(q),...,\mathfrak{f}_1(\tau)\cdot B_{k,k}(q)\big)\\
	&=\mathfrak{f}_1(\tau)^k\cdot\mathcal{W}_k(q).
\end{align*}
Therefore,
\[\mathcal{W}_k(q)=\frac{\eta(\tau)^{2k^2-1}}{\eta(2\tau)^{2k-1}},\]
as desired.

\section{Modularity of $\mathcal{F}_k$}\label{section3}

In order to prove Main Theorem \ref{wronskianquotient}, we first need to prove Theorem \ref{invariance}. The proof of Theorem \ref{invariance} requires that we view the Andrews-Bressoud series $B_{k,j}(q)$ as a linear combination of theta functions divided by $\eta(\tau)$. We recall a particular definition from Chapter 10 of \cite{iwaniec1997}: for $r=1$, the symmetric, positive definite matrix $A=(16k)$, the spherical function $P(m)=1$, and $N=16k$, we define the congruent theta functions for $h\in\mathbb{Z}$,
\begin{equation}\label{thetafncs}
\Theta(\tau;h)\coloneqq\sum_{m\equiv h\pmod{16k}} q^{\frac{m^2}{32k}},
\end{equation}
in order to rewrite our $B_{k,j}(q)$.

We also need the transformation properties of $\Theta(\tau;h)$ and $\eta(\tau)$ when we apply the generators of $\Gamma_0(2)$. Note that $\Gamma_0(2)$ is generated by
\begin{equation}\label{gamma02gens}T=\begin{pmatrix}1 & 1\\0 & 1\end{pmatrix}\qquad\qquad M=\begin{pmatrix}1 & 0\\ -2 & 1\end{pmatrix}=-ST^2S,\end{equation}
where $S=\begin{pmatrix} 0 & -1\\1 & 0\end{pmatrix}$. Then Propositions 10.3 and 10.4 of \cite{iwaniec1997} give the following transformation properties for our theta functions:
\begin{align}\label{thetatransformationT}
\Theta(T\tau;h)&=q^{\frac{h^2}{32k}}\cdot\Theta(\tau;h)\\
\label{thetatransformationM}\Theta(M\tau;h)&=\frac{\sqrt{1-2\tau}}{8k}\sum_{\ell\in\mathcal{H}}\left(\sum_{h'\in\mathcal{H}}e\left(\frac{h'(h'+h+\ell)}{8k}\right)\right)\Theta(\tau;\ell).
\end{align}
Then we use the transformation properties of $\eta(\tau)$ from \cite{apostol1990} to obtain
\begin{align}\label{etatransformationT}
\eta(T\tau)&=q^{\frac{1}{24}}\cdot\eta(\tau)\\
\label{etatransformationM}\eta(M\tau)&=\frac{\sqrt{1-2\tau}}{q^{\frac{1}{12}}}\cdot\eta(\tau).
\end{align}

\begin{proof}[Proof of Theorem \ref{invariance}]
First we give an alternate representation of the character $\mathrm{ch}_{k,j}$ from Proposition 2.1 of \cite{milas2007weber},
\[\mathrm{ch}_{k,j}(q)=q^{\frac{(j-1)^2}{4k}}\cdot\frac{(-q)^{\infty}}{(q)_{\infty}}\cdot\sum_{n\in\mathbb{Z}}\left(q^{2n(2kn+j-1)}-q^{(2n+1)(2kn+k-j+1)}\right).\]
Then using this, we rewrite the $B_{k,j}$ in terms of the theta functions from Equation \eqref{thetafncs} and the Dedekind eta-function:
\[B_{k,j}(q)=\frac{\Theta(\tau;2(j-1))-\Theta(\tau;4k-2(j-1))}{\eta(\tau)}.\]
Using the transformation properties for $\eta(\tau)$ and $\Theta(\tau;h)$ in Equations \eqref{thetatransformationT} - \eqref{etatransformationM}, we find that
\[B_{k,j}(Tq)=\gamma(k,j)\cdot B_{k,j}(q),\]
where $\gamma(k,j)\in\mathbb{C}$ constant, and 
\[B_{k,j}(Mq)=\sum_{j=1}^{k}\delta(k,j)\cdot B_{k,j}(q),\]
where $\delta(k,j)\in\mathbb{C}$ constant.
\end{proof}

\begin{proof}[Proof of Main Theorem \ref{wronskianquotient}]
First, well-known facts about Wronskians give that $\mathcal{F}_k(q)$ is the constant term of \eqref{abode}, which implies that $\mathcal{F}_k(q)$ is holomorphic on the upper half-plane and at the cusps of $\Gamma_0(2)$. Thus we need only show the transformation property holds.\\
Consider $A$ to be the linear transformation matrix given by the action of $T$ on the vector space spanned by $\{B_{k,j}\}$; we know this linear transformation is invertible so $\mathrm{det}(A)\neq0$. Then using Lemma 1.3 (a) of \cite{milas2010}, we have
\[\alpha(k)\cdot W(A\cdot B_{k,1}(q),...,A\cdot B_{k,k}(q))=\mathrm{det}(A)\cdot\mathcal{W}_k(q),\]
and
\[\beta(k)\cdot W(A\cdot B_{k,1}'(q),...,A\cdot B_{k,k}'(q))=\mathrm{det}(A)\cdot\widetilde{\mathcal{W}}_k(q),\]
which implies
\[\mathcal{F}_k(Tq)=\frac{\mathrm{det}(A)\cdot\widetilde{\mathcal{W}}_k(q)}{\mathrm{det}(A)\cdot\mathcal{W}_k(q)}=\mathcal{F}_k(q).\]
Now, if $C$ is the linear transformation matrix given by the action of $M$ on the vector space spanned by $\{B_{k,j}\}$ as described in Lemma \ref{invariance}, $\mathrm{det}(C)\neq0$ as the linear transformation is invertible. Again, using Lemma 1.3 (a) of \cite{milas2010}, 
\[\alpha(k)\cdot W(C\cdot B_{k,1}(q),...,C\cdot B_{k,k}(q))=\mathrm{det}(C)\cdot\mathcal{W}_k(q),\]
and
\[\beta(k)\cdot W(C\cdot B_{k,1}'(q),...,C\cdot B_{k,k}'(q))=(-2\tau+1)^{2k}\cdot\mathrm{det}(C)\cdot\widetilde{\mathcal{W}}_k(q).\]
Thus
\[\mathcal{F}_k(Mq)=\frac{(-2\tau+1)^{2k}\cdot\mathrm{det}(C)\cdot\widetilde{\mathcal{W}}_k(q)}{\mathrm{det}(C)\cdot\mathcal{W}_k(q)}=(-2\tau+1)^{2k}\cdot\mathcal{F}_k(q).\]
Therefore, $\mathcal{F}_k(q)$ is a modular form of weight $2k$ for $\Gamma_0(2)$.
\end{proof}

\section{Acknowledgements}

I would like to thank Marie Jameson for finding this project and for her guidance (and patience) throughout the process. I would also like to acknowledge and thank Shashank Kanade for sharing his vertex operator algebra expertise.

\bibliography{modularforms}
\bibliographystyle{plain}

\end{document}